\numberwithin{equation}{section}
\newcommand{\beq}{\begin{equation}}
\newcommand{\eeq}{\end{equation}}
\newtheorem{theorem}{Theorem}[section]
\newtheorem{proposition}[theorem]{Proposition}
\newtheorem{corollary}[theorem]{Corollary}
\newtheorem{remark}{Remark}
\newtheorem{note}{Note}
\newcommand{\rmd}{\mathrm{d}}
\newcommand{\rmi}{\mathrm{i}}
\newcommand{\Real}{\mathop{\mathrm{Re}}}
\newcommand{\Imag}{\mathop{\mathrm{Im}}}
\DeclareMathOperator{\sgn}{sgn}
\newcommand{\N}{\mathbb{N}}
\newcommand{\R}{\mathbb{R}}
\newcommand{\Z}{\mathbb{Z}}
\newcommand{\C}{\mathbb{C}}
\newcommand{\tfJ}{\widetilde{\mathfrak{J}}}
\newcommand{\sds}{\strut\displaystyle}
\newcommand{\ud}{\frac{1}{2}}
\newcommand{\uds}{\textstyle\frac{1}{2}}
\newcommand{\sku}{\vspace*{0.1cm}}
\newcommand{\skd}{\vspace*{0.2cm}}
\newcommand{\skt}{\vspace*{0.3cm}}
\begin{document}

\title[Bessel functions of the first kind]
{Integral representation for Bessel's functions of the first kind and Neumann series}

\author[E. De Micheli]{Enrico De Micheli}
\address{Consiglio Nazionale delle Ricerche, Via De Marini, 6 - 16149 Genova, Italy}
\email{enrico.demicheli@cnr.it}

\subjclass[2010]{33C10,40C10,33B20}

\keywords{Bessel function of the first kind, Neumann series of Bessel functions, Integral representations}

\begin{abstract}
A Fourier-type integral representation for Bessel's function of the first kind and complex order is
obtained by using the Gegenbuaer extension of Poisson's integral representation for the Bessel function
along with a trigonometric integral representation of Gegenbauer's polynomials. This representation lets us
express various functions related to the incomplete gamma function in series of Bessel's functions.
Neumann series of Bessel functions are also considered and a new closed-form integral representation for this 
class of series is given. The density function of this representation is simply the analytic function 
on the unit circle associated with the sequence of coefficients of the Neumann series.
Examples of new closed-form integral representations of special functions are also presented. 
\end{abstract}

\maketitle

\section{Introduction}
\label{se:introduction}

The Bessel function of the first kind and order $\nu$ is defined by the series \cite[Eq. 8, p. 40]{Watson}
\beq
J_\nu(z) \doteq \left(\frac{z}{2}\right)^\nu\sum_{k=0}^\infty\frac{(-1)^k\,(z/2)^{2k}}{k! \, \Gamma(\nu+k+1)},
\qquad (z\in\C;\nu\in\C,\nu\neq-1,-2,\ldots),
\label{intro.1}
\eeq
which is convergent absolutely and uniformly in any closed domain of $z$ and in any bounded
domain of $\nu$.
It is the solution of Bessel's equation
\beq
z^2\,\frac{\rmd^2 y}{\rmd z^2}+z\,\frac{\rmd y}{\rmd z}+(z^2-\nu^2)=0,
\label{intro.2}
\eeq
which is nonsingular at $z=0$. The function $J_\nu(z)$ is therefore an analytic function of $z$ for any $z$,
except for the branch point $z=0$ if $\nu$ is not an integer.

\sku

In this paper, a new integral representation for the Bessel functions of the first kind and order $\nu\in\C$,
$\Real\nu>-\ud$, is obtained. This representation, which is given in Section \ref{se:besselfunctions},
generalizes to complex values of the order $\nu$ the classical Bessel's integral
\beq
J_n(z) = \frac{(-\rmi)^n}{\pi}\int_0^\pi e^{\rmi z\cos\theta}\cos n\theta\,\rmd\theta
\qquad (n\in\Z),
\label{add.1}
\eeq
which holds only for integral order values (see, e.g., formula \eqref{Jcos.3}).
The Bessel functions of the first kind are expressed in terms of an integral 
of Fourier-type, which involves the regularized incomplete gamma function $P(\{\nu\},z)$, with $\{\nu\}$
being the (complex) fractional part of $\nu$. This result is achieved by using the Poisson integral representation 
of $J_{\nu+n}(z)$ ($\Real\nu>-\ud$, $\ell=0,1,2,\ldots$)
given in terms of Gegenbauer polynomials and then by exploiting the integral representation of the latter
polynomials that we prove in Section \ref{se:dimension}. 

The Fourier form of the integral representation of $J_{\nu+n}(z)$ motivates us to consider
the Fourier inversion formula in Section \ref{se:inversion}. This analysis
allows us to obtain, for either integer and half-integer orders $\nu$, 
a trigonometric expansion of the lower incomplete gamma function, whose 
coefficients are related to (modified and unmodified) Bessel functions of the first kind. Thus,
classical Bessel series expansions of various special functions 
which are connected with the incomplete gamma function, can be easily obtained in a unified form. 

A Neumann series of Bessel functions is an expansion of the type 
\beq
\sum_{n=0}^\infty a_n \,J_{\nu+n}(z) \qquad (z,\nu\in\C),
\label{Neu}
\eeq
where $\{a_n\}_{n=0}^\infty$ are given coefficients. Many special
functions of the mathematical physics enjoy an expansion of this type, e.g., 
Kummer confluent hypergeometric's, Lommel's , Kelvin's, Whittaker's, and so on.
In Section \ref{se:neumann} we exploit the novel representation of Bessel's functions
presented in Section \ref{se:besselfunctions} in order to derive a simple closed-form integral 
representation of expansions \eqref{Neu}. The set of coefficients $\{a_n\}_{n=0}^\infty$, which
characterizes the series \eqref{Neu}, comes into this integral representation in a very simple way
through the associated analytic function $A(z)$ in the unit disk $\mathbb{D}$,
i.e.: $A(z)=\sum_{n=0}^\infty a_n z^n$ with a weak restriction on the 
sequence of coefficients $\{a_n\}$, which is sufficient to guarantee the boundedness of $A(z)$ on
the unit circle. Examples of special functions with this kind of closed-form integral representation are also given. 

\section{Connection between the coefficients of Gegenbauer and Fourier expansions}
\label{se:dimension}

The Gegenbauer (ultraspherical) polynomials $C^\nu_\ell(x)$ of order $\nu$ ($\nu>-\ud, \nu\neq 0$)
may be defined by means of the generating function \cite[Eq. (4.7.23)]{Szego}
(see also \cite[Eq. (4.7.6)]{Szego}):
\begin{equation}
\sum_{\ell=0}^\infty C_\ell^{\nu}(x)\,t^\ell = (1-2xt+t^2)^{-\nu} \qquad 
(x\in[-1,1];\,t\in(-1,1)).
\label{a.1}
\end{equation}
For fixed $\nu$, the Gegenbauer polynomials are orthogonal on the interval $[-1,1]$ 
with respect to the weight function $w^{\nu}(x)=(1-x^2)^{(\nu-1/2)}$, that is:
\beq
\int_{-1}^1 C_\ell^{\nu}(x) \, C_n^{\nu}(x)\, w^{\nu}(x) \,\rmd x = h_\ell^\nu\ \delta_{\ell,n},
\label{a.9}
\eeq
where:
\beq
h_\ell^\nu = \frac{1}{(\ell+\nu)}\frac{2^{1-2\nu}\,\pi\,\Gamma(\ell+2\nu)}{\ell!\,\left[\Gamma(\nu)\right]^2}.
\label{a.9bis}
\eeq
Particularly important special cases of Gegenbauer polynomials are obtained
for $\nu=1/2$, which gives the Legendre polynomials $P_\ell(x)$,
for $\nu=1$ yielding the Chebyshev polynomials of the second kind $U_\ell(x)$, and in a suitable limiting
form for $\nu\to 0$ the Chebyshev polynomials of the first kind $T_\ell(x)$ \cite{Szego}:
$\lim_{\nu\to 0}(\ell!/(2\nu)_\ell) C_\ell^{\nu}(x) = T_\ell(x)$, where $(\cdot)_\ell$ denotes the Pochhammer symbol.

For $\Real\nu>0$ they admit the following integral representation \cite[Eq. (1), p. 559]{Vilenkin}:
\beq
C_\ell^{\nu}(x)=\frac{(\ell+\nu)\,h_\ell^\nu}{\pi}
\int_0^\pi (x+\rmi\sqrt{1-x^2}\cos\eta)^\ell\,(\sin\eta)^{2\nu-1}\,\rmd\eta. 
\label{d.1}
\eeq

Now, our aim in this section is to present an integral representation of the Gegenbauer polynomials.
To this end, we prove the following proposition (see also \cite{Bros,DeMicheli1}).

\begin{proposition}
\label{pro:1}

For $\Real\nu>0$, the following integral representation for the Gegenbauer polynomials 
$C_\ell^{\nu}(\cos u)$ ($u\in[0,\pi]$, $\ell\in\N_0$) holds:
\beq
C_\ell^{\nu}(\cos u) = \frac{(\ell+\nu)\,h_\ell^\nu}{\pi}
\frac{2^{\nu-1}e^{-\rmi\pi\nu}}{(\sin u)^{(2\nu-1)}}
\int_u^{2\pi-u} \!\! e^{\rmi(\ell+\nu)t} \ (\cos u-\cos t)^{\nu-1}\,\rmd t.
\label{d.22}
\eeq
\end{proposition}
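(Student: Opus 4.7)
The plan is to convert the representation \eqref{d.1} into a complex contour integral on the chord $L$ joining $e^{\rmi u}$ and $e^{-\rmi u}$, to deform $L$ to the long arc $\Gamma$ of the unit circle connecting the same two points through $z=-1$, and then to parametrize $\Gamma$ by $z = e^{\rmi t}$ to produce \eqref{d.22}.

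\emph{Step 1 (from \eqref{d.1} to a contour integral on $L$).} Substitute $z = \cos u + \rmi\sin u\cos\eta$ in \eqref{d.1}; as $\eta$ runs over $[0,\pi]$, $z$ traces $L$. A short calculation gives $z - e^{\pm\rmi u} = \rmi\sin u(\cos\eta\mp 1)$, and hence the identity
\[
g(z):=(z-e^{\rmi u})(z-e^{-\rmi u}) = \sin^2 u\,\sin^2\eta
\]
on $L$. Combined with $\rmd z = -\rmi\sin u\,\sin\eta\,\rmd\eta$, and using the principal branch of $g^{\nu-1}$ (which is natural since $g$ is real positive on $L$), this rewrites \eqref{d.1} as
\[
C_\ell^\nu(\cos u) = \frac{(\ell+\nu)\,h_\ell^\nu}{\pi}\,\rmi\sin^{1-2\nu}u\int_L z^\ell\,g(z)^{\nu-1}\,\rmd z.
\]

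\emph{Step 2 (deformation to the arc $\Gamma$).} Place the branch cut of $g^{\nu-1}$ along the complementary short arc, through $z=1$. Then the integrand is holomorphic throughout the lens-shaped cap bounded by $L$ and $\Gamma$, and the singularities at the endpoints $e^{\pm\rmi u}$ are integrable, being of type $(z-e^{\pm\rmi u})^{\nu-1}$ with $\Real\nu>0$. Cauchy's theorem applied to the closed contour $L$ followed by $-\Gamma$ (which bounds the cap) yields $\int_L = \int_\Gamma$.

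\emph{Step 3 (parametrization and branch tracking on $\Gamma$).} With $z = e^{\rmi t}$, $t\in[u,2\pi-u]$, one has
\[
g(e^{\rmi t}) = 2e^{\rmi t}(\cos t - \cos u) = 2(\cos u - \cos t)\,e^{\rmi(t-\pi)},
\]
where the second form is dictated by the branch of Step~2: continuing $g^{\nu-1}$ along $\Gamma$ from $z=-1$, at which $g(-1) = 2(1+\cos u)>0$ and the principal value applies, pins $\arg g = t-\pi$. Therefore
\[
g(e^{\rmi t})^{\nu-1} = 2^{\nu-1}(\cos u - \cos t)^{\nu-1}\,e^{\rmi(\nu-1)(t-\pi)}.
\]
Assembling this with $z^\ell = e^{\rmi\ell t}$, $\rmd z = \rmi e^{\rmi t}\rmd t$, and the Step~1 prefactor $\rmi\sin^{1-2\nu}u$, and using $e^{-\rmi\pi(\nu-1)} = -e^{-\rmi\pi\nu}$, all factors collapse to give exactly the right-hand side of \eqref{d.22}.

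The delicate point is the coupling of Steps~2 and 3: choosing the branch cut along the short arc is what legitimizes the chord-to-arc deformation, and tracking $g^{\nu-1}$ continuously along $\Gamma$ back to the chord's principal branch is what produces the unambiguous phase $e^{-\rmi\pi\nu}$. All remaining manipulations are routine algebraic bookkeeping.
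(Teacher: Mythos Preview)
Your proof is correct and follows essentially the same route as the paper's: both start from \eqref{d.1} with $x=\cos u$, pass to a complex contour integral on the chord joining $e^{\pm\rmi u}$ via the substitution $z=\cos u+\rmi\sin u\cos\eta$, deform to the long arc through $z=-1$, and fix the phase by checking that $g(-1)=2(1+\cos u)>0$ so the principal branch persists there. Your handling of the branch---placing the cut on the short arc and tracking $\arg g(e^{\rmi t})=t-\pi$ along $\Gamma$---is slightly more explicit than the paper's, but the argument is the same in substance.
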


\begin{proof}
In representation \eqref{d.1} we first introduce the variable $u$, defined by $x=\cos u$ ($u\in[0,\pi]$):
\beq
C_\ell^{\nu}(\cos u)=\frac{(\ell+\nu)\,h_\ell^\nu}{\pi}
\int_0^\pi (\cos u+\rmi\sin u\cos\eta)^\ell\,(\sin\eta)^{2\nu-1}\,\rmd\eta.
\label{d.1.bis}
\eeq
\begin{figure}[tb]
\begin{center}
\includegraphics[width=3in,angle=0]{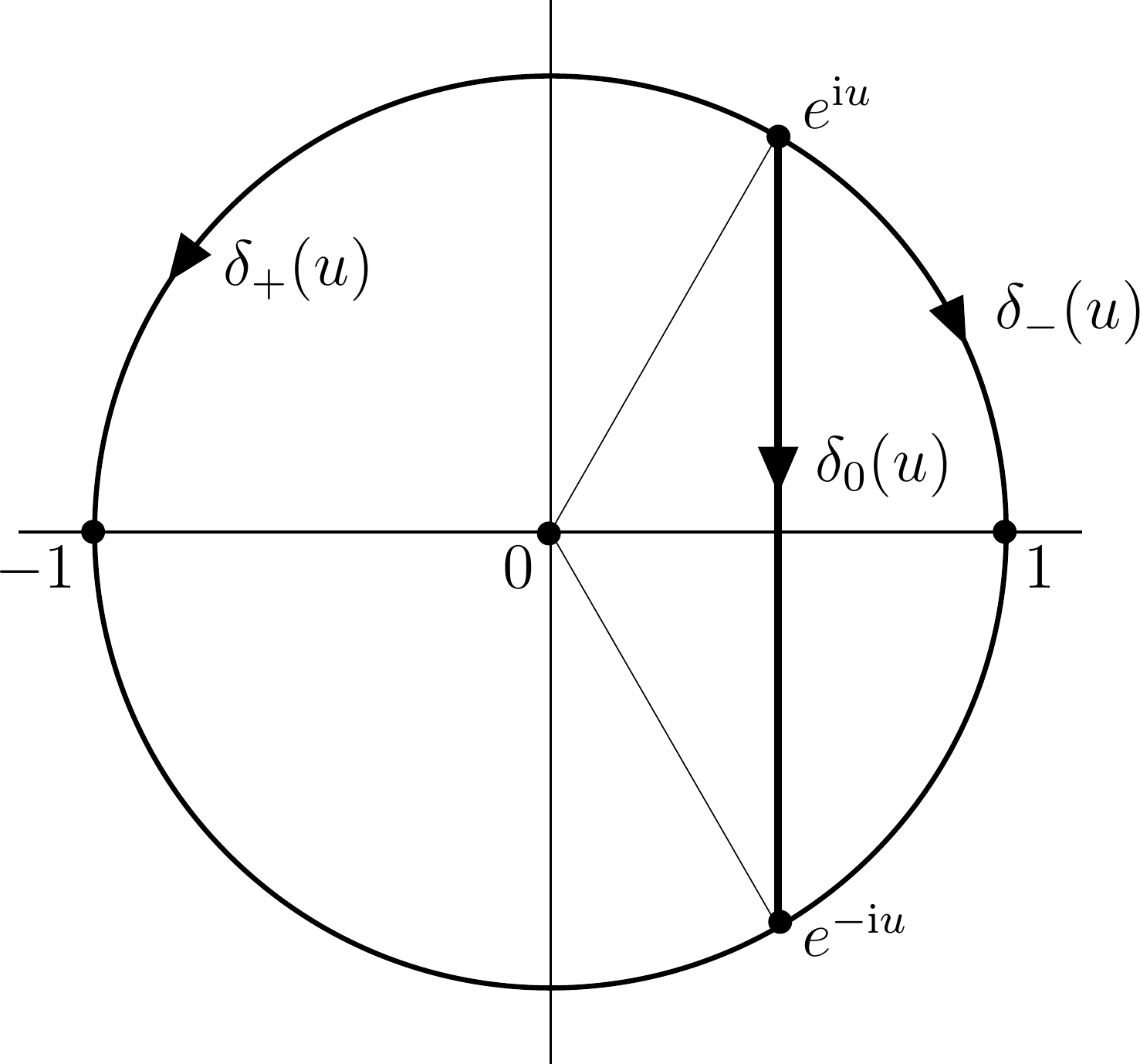}
\caption{\label{figu1} Integration path for the evaluation of the integral representation
(\protect\ref{d.22}) of the Gegenbauer polynomials.}
\end{center}
\end{figure}
Then, in the integral in \eqref{d.1.bis} we substitute to $\eta$ the complex integration variable $\tau$ defined by
\begin{displaymath}
e^{\rmi\tau}=\cos u + \rmi\sin u\cos\eta.
\label{d.p1}
\end{displaymath}
It can be checked that
\beq
2e^{\rmi\tau}(\cos\tau-\cos u)=(e^{\rmi\tau}-e^{\rmi u})(e^{\rmi\tau}-e^{-\rmi u})=\sin^2\! u \, \sin^2\!\eta.
\label{d.p2}
\eeq
Now, since $e^{\rmi\tau}\rmd\tau=-\sin u\sin\eta\,\rmd\eta$, the integrand on the right-hand side of \eqref{d.1.bis} 
can be written as follows:
\beq
\begin{split}
-(\sin u)^{-(2\nu-1)}\,e^{\rmi(\ell+1)\tau}\,
\left[(e^{\rmi\tau}-e^{\rmi u})(e^{\rmi\tau}-e^{-\rmi u})\right]^{(\nu-1)}\,\rmd\tau \\
\quad =-(\sin u)^{-(2\nu-1)}\,e^{\rmi(\ell+\nu)\tau}\,\left[2(\cos\tau-\cos u)\right]^{(\nu-1)}\,\rmd\tau.
\end{split}
\label{d.p3}
\eeq
In order to determine the integration path, consider the intermediate step where $e^{\rmi\tau}$ 
is chosen as integration variable; the original path (corresponding to $\eta\in[0,\pi]$) is the 
(oriented) linear segment $\delta_0(u)$ starting at $e^{\rmi u}$ and ending at $e^{-\rmi u}$. 
Since (as shown by \eqref{d.p3}) the integrand is an analytic function of $e^{\rmi\tau}$ in the 
disk $|e^{\rmi\tau}|<1$ (since $\ell\in\N_0$), the integration path $\delta_0(u)$ can be replaced by the circular path 
$\delta_+(u)=\{e^{\rmi\tau}:\,\tau=t,u\leqslant t\leqslant 2\pi-u\}$ (see Fig. \ref{figu1}). 
Moreover, by using the fact that
$[(e^{\rmi\tau}-e^{\rmi u})(e^{\rmi\tau}-e^{-\rmi u})]^{\nu-1}$ is positive for $e^{\rmi\tau}\in\delta_0(u)\cup\R$
and therefore at $e^{\rmi\tau}=e^{\rmi\pi}$, 
we conclude from the left equality in \eqref{d.p2} that in 
the r.h.s. of \eqref{d.p3} the following specification holds (for $\tau=t; u\leqslant t\leqslant 2\pi-u$):
\begin{displaymath}
[2(\cos t-\cos u)]^{\nu-1}
= e^{-\rmi(\nu-1)\pi} [2(\cos u-\cos t)]^{\nu-1}.
\label{d.p4}
\end{displaymath}
Finally, accounting for this latter expression, the integral representation \eqref{d.22} follows directly 
from \eqref{d.1.bis}.
\end{proof}

\begin{remark}
In the proof of Proposition \ref{pro:1} we could even choose the integration path
$\delta_-(u) = \{e^{\rmi\tau}:\,\tau=t,-u\leqslant t\leqslant u\}$, oriented as shown in Fig. \ref{figu1}.
Following the same arguments given above, it is easily seen that this option yields the following 
alternative integral representation of the Gegenbauer polynomials \cite{Bros,DeMicheli3}:
\beq
C_\ell^{\nu}(\cos u) = \frac{(\ell+\nu)\,h_\ell^\nu}{\pi}
\frac{2^{\nu-1}}{(\sin u)^{2\nu-1}}
\int_{-u}^{u} \!\! e^{\rmi(\ell+\nu)t} \ (\cos t-\cos u)^{\nu-1}\,\rmd t.
\label{gege}
\eeq
\end{remark}

\skt

\section{Fourier-type integral representation of Bessel functions of the first kind}
\label{se:besselfunctions}

It is well-known that the Bessel functions of the first kind of integral order $n$ can be represented 
by Bessel's trigonometric integral \eqref{add.1} (see also \cite[Eq. 10.9.2]{DLMF}).
For $n\in\Z$, $J_n(x)$ satisfies the symmetry relation: $J_{-n}(x) = (-1)^n\,J_n(x)$.
This latter relation no longer holds when $n$ is not an integral number since, for $\nu\not\in\Z$, 
$J_{\nu}(x)$ and $J_{-\nu}(x)$ are linearly independent solutions of the Bessel equation of order $\nu$.
Numerous formulae express the Bessel functions of the first kind as definite integrals, which can
be exploited to obtain, for instance, approximations and asymptotic expansions 
(see \cite[Chapter VI]{Watson}, \cite{Drascic},
and the website \cite[Sect. 10.9]{DLMF} for a useful collection of formulae).
Our goal in what follows is to generalize the trigonometric representation \eqref{add.1} in order
to obtain a Fourier-type integral representation for the Bessel functions of the first kind with holds
for complex order $\nu$ with $\Real\nu>-\ud$. We can prove the following theorem.

\begin{theorem}
\label{pro:J}

Let $\ell\in\N_0\doteq\{0,1,2,\ldots\}$ be a non-negative integer, and $\nu$ any complex number such that $\Real\nu>-\ud$.
Then, the following integral representation for the Bessel functions of the first kind $J_{\nu+\ell}(z)$ holds:
\beq
J_{\nu+\ell}(z) = (-\rmi)^\ell \int_{-\pi}^\pi \mathfrak{J}^{(\nu)}_z(\theta) \ e^{\rmi\ell\theta}\,\rmd\theta 
\qquad (\ell\in\N_0,\Real\nu>-\uds),
\label{d.J.1}
\eeq
where the $2\pi$-periodic function $\mathfrak{J}^{(\nu)}_z(\theta)$ is given by
\beq
\mathfrak{J}^{(\nu)}_z(\theta) = \frac{\rmi^\nu}{2\pi} e^{\rmi\nu[\theta-\pi\sgn(\theta)]}e^{\rmi z\cos\theta} 
\ P(\{\nu\},-\rmi z\,(1-\cos\theta))  \qquad (\Real\nu>-\uds),
\label{d.J.2}
\eeq
and $\sgn(\cdot)$ is the sign function,$\{\nu\}$ is the complex fractional part of $\nu$,
$P(\nu,w)\doteq\gamma(\nu,w)/\Gamma(\nu)$ denotes the regularized incomplete gamma function, 
$\gamma(\nu,w)$ being the lower incomplete gamma function.
\end{theorem}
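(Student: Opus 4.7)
The plan is to combine two ingredients: the Gegenbauer extension of Poisson's integral
$$
J_{\nu+\ell}(z) = \frac{(-\rmi)^\ell\,(z/2)^\nu}{(\ell+\nu)\,\Gamma(\nu)\,h_\ell^\nu}
\int_0^\pi e^{\rmi z\cos u}\,C_\ell^{\nu}(\cos u)\,(\sin u)^{2\nu}\,\rmd u
\qquad (\Real\nu>0),
$$
(which follows by orthogonality \eqref{a.9} from the Gegenbauer expansion of $e^{\rmi z\cos u}$) and the integral representation \eqref{d.22} for $C_\ell^{\nu}(\cos u)$ furnished by Proposition \ref{pro:1}. Inserting \eqref{d.22} into the Gegenbauer-Poisson formula, the $(\ell+\nu)$ and $h_\ell^\nu$ factors cancel, and the weight $(\sin u)^{2\nu}$ combines with the denominator $(\sin u)^{2\nu-1}$ of \eqref{d.22} to leave a simple $\sin u$. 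This produces a double integral over the triangle $\{(u,t):0\leqslant u\leqslant\pi,\ u\leqslant t\leqslant 2\pi-u\}$.

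Next I would exchange the order of integration. For fixed $t$, the variable $u$ runs over $[0,\min(t,2\pi-t)]$, so the $t$-integral splits naturally into $t\in[0,\pi]$ and $t\in[\pi,2\pi]$. On the second piece I set $\theta=t-2\pi\in[-\pi,0]$; since $\ell\in\Z$, the exponential picks up a phase $e^{2\pi\rmi\nu}$, which when combined with the prefactor $e^{-\rmi\pi\nu}$ from \eqref{d.22} produces the unified phase $e^{-\rmi\pi\nu\sgn\theta}$. On the first piece I simply rename $t=\theta\in[0,\pi]$. After this rearrangement the whole integrand carries the factor $e^{\rmi(\ell+\nu)\theta}e^{-\rmi\pi\nu\sgn\theta}$, and the remaining inner integral is
$$
I(\theta)=\int_0^{|\theta|} e^{\rmi z\cos u}\,(\cos u-\cos\theta)^{\nu-1}\sin u\,\rmd u.
$$
The substitutions $v=\cos u$, then $w=v-\cos\theta$, then $s=-\rmi z w$ convert $I(\theta)$ into
$e^{\rmi z\cos\theta}(-\rmi z)^{-\nu}\gamma(\nu,-\rmi z(1-\cos\theta))$. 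Collecting constants (using $(z/2)^\nu\cdot 2^{\nu-1}z^{-\nu}=1/2$ and $(-\rmi z)^{-\nu}=\rmi^{\nu}z^{-\nu}$, and writing $\gamma=\Gamma\cdot P$), one obtains \eqref{d.J.1}-\eqref{d.J.2} with $P(\nu,\cdot)$ in place of $P(\{\nu\},\cdot)$.

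The main obstacle will be reconciling this natural output $P(\nu,\cdot)$ with the statement's $P(\{\nu\},\cdot)$. The two are related by applying the recursion $P(\mu+1,w)=P(\mu,w)-e^{-w}w^\mu/\Gamma(\mu+1)$ exactly $m$ times, where $m=\nu-\{\nu\}\in\N_0$; equivalently, the boundary terms generated by $m$ successive integrations by parts in the $w$-integral produce precisely these correction terms. On substitution $e^{-w}e^{\rmi z\cos\theta}$ collapses to the constant $e^{\rmi z}$, so each correction to the Fourier coefficient has the form
$\int_{-\pi}^\pi e^{\rmi\nu[\theta-\pi\sgn\theta]}(1-\cos\theta)^{\{\nu\}+k}e^{\rmi\ell\theta}\,\rmd\theta$, $k=0,\ldots,m-1$. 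Writing $1-e^{\pm\rmi\theta}=\mp 2\rmi e^{\pm\rmi\theta/2}\sin(\theta/2)$ one sees that on $\theta>0$ the correction is the boundary value of the function $w^{m-k}(1-w)^{2(\{\nu\}+k)}$ analytic in the unit disk (with $w=e^{\rmi\theta}$), and similarly in $w=e^{-\rmi\theta}$ on $\theta<0$; a direct expansion via product-to-sum / beta-type identities then shows these Fourier coefficients are supported only on indices $\ell\in\{-1,-2,\ldots,-m\}$ and thus vanish for the range $\ell\in\N_0$ stated in the theorem. The last, routine step is to extend the identity from $\Real\nu>0$ (the range in which Proposition \ref{pro:1} and the intermediate substitutions are valid) to $\Real\nu>-\ud$ by analytic continuation in $\nu$ on both sides of \eqref{d.J.1}.
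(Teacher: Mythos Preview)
Your proof follows essentially the same route as the paper's: Gegenbauer--Poisson combined with Proposition~\ref{pro:1}, interchange of the order of integration with the shift $t\mapsto t-2\pi$ on the piece $t\in[\pi,2\pi]$ to produce the phase $e^{-\rmi\pi\nu\sgn\theta}$, reduction of the inner Abel-type integral to $\gamma(\nu,\cdot)$ via the substitutions you list, replacement of $P(\nu,\cdot)$ by $P(\{\nu\},\cdot)$ through the recurrence together with the vanishing of the correction integrals \eqref{integrals} for $\ell\in\N_0$, and finally analytic continuation in $\nu$ down to $\Real\nu>-\tfrac12$. One small inaccuracy worth flagging: those correction integrals are not supported on the finite set $\{-1,\dots,-m\}$ for generic~$\nu$---an explicit evaluation (via $\int_0^\pi(1+\cos\phi)^{\nu-j}\cos((\ell+\nu)\phi)\,\rmd\phi$ and the beta integral) produces a factor $1/\Gamma(1-j-\ell)$, so the $j$-th term is supported on all $\ell\leq -j$---but the conclusion you actually use, namely vanishing for every $\ell\geq 0$, is correct, and the paper itself simply asserts this step without proof.
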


\begin{proof}
We start from Gegenbauer's generalization of Poisson's integral representation of the 
Bessel functions of the first kind \cite[\S 3.32, Eq. (1), p. 50]{Watson}:
\beq
J_{\nu+\ell}(z) = \frac{(-\rmi)^\ell\Gamma(2\nu)\,\ell!}{\Gamma(\nu+\uds)\Gamma(\uds)\Gamma(2\nu+\ell)}
\left(\frac{z}{2}\right)^\nu\int_0^\pi e^{\rmi z\cos u}(\sin u)^{2\nu} 
C_\ell^\nu(\cos u)\,\rmd u,
\label{d.J.4}
\eeq
where $C_\ell^\nu(t)$ is the Gegenbauer polynomial of order $\nu$ and degree $\ell$.
Formula \eqref{d.J.4} holds for $\ell\in\N_0$ and $\Real\nu>-\uds$.
Now, we plug \eqref{d.22} into \eqref{d.J.4} and, using the Legendre duplication formula
for the gamma function, we obtain:
\beq
J_{\nu+\ell}(z) = \frac{(-\rmi)^\ell\,e^{-\rmi\pi\nu}\, z^\nu}{2\pi\,\Gamma(\nu)}
\int_0^\pi\!\rmd u\,\sin u\,e^{\rmi z\cos u}
\int_u^{2\pi-u}\!\!e^{\rmi(\ell+\nu)\theta}\,(\cos u-\cos\theta)^{\nu-1}\,\rmd\theta.
\label{d.J.6}
\eeq
Interchanging the order of integration, \eqref{d.J.6} can be written as follows:
\beq
\begin{split}
J_{\nu+\ell}(z) &= \frac{(-\rmi)^\ell\,e^{-\rmi\pi\nu}z^\nu}{2\pi\,\Gamma(\nu)}
\left[\int_{0}^\pi\!\rmd\theta\,e^{\rmi(\ell+\nu)\theta}\int_{0}^\theta \!
e^{\rmi z\cos u} (\cos u-\cos\theta)^{\nu-1}\,\sin u\,\rmd u \right. \\
&\quad\left. +\int_\pi^{2\pi}\!\rmd\theta\,e^{\rmi(\ell+\nu)\theta}
\int_{0}^{2\pi-\theta}\!e^{\rmi z\cos u}\,(\cos u-\cos\theta)^{\nu-1}\,\sin u\,\rmd u \right].
\end{split}
\label{d.J.7}
\eeq
Next, changing the integration variables: $\theta-2\pi\to\theta$ and $u\to-u$,
the second integral on the r.h.s. of \eqref{d.J.7} becomes:
\beq
e^{\rmi\nu 2\pi}\int_{-\pi}^0 \rmd\theta \ e^{\,\rmi(\ell+\nu)\theta}
\int_0^\theta e^{\rmi z\cos u}(\cos u-\cos\theta)^{\nu-1} \sin u \,\rmd u,
\label{d.J.7.1} \nonumber
\eeq
which, inserted in \eqref{d.J.7}, yields:
\beq
\begin{split}
J_{\nu+\ell}(z) &= \frac{(-\rmi)^\ell z^\nu}{2\pi\,\Gamma(\nu)}
\left[e^{\rmi\nu\pi}\int_{-\pi}^0\!\rmd\theta\,e^{\,\rmi(\ell+\nu)\theta}
\int_{0}^\theta\!e^{\rmi z\cos u} (\cos u-\cos\theta)^{\nu-1}\sin u\,\rmd u \right. \\
&\quad\left. +e^{-\rmi\nu\pi}\int_0^{\pi}\!\rmd\theta\,e^{\rmi(\ell+\nu)\theta}\int_{0}^{\theta} \! 
e^{\rmi z\cos u} (\cos u-\cos\theta)^{\nu-1}\sin u\,\rmd u \right].
\end{split}
\label{d.J.8}
\eeq
Formula \eqref{d.J.8} can then be written as follows:
\beq
J_{\nu+\ell}(z) = (-\rmi)^\ell
\int_{-\pi}^\pi \mathfrak{J}^{(\nu)}_z(\theta) \ e^{\rmi\ell\theta}\,\rmd\theta 
\qquad (\ell\in\N_0, \Real\nu>0),
\label{d.J.9}
\eeq
where
\beq
\begin{split}
\mathfrak{J}^{(\nu)}_z(\theta) &= 
\frac{z^\nu}{2\pi\Gamma(\nu)}e^{\rmi\nu[\theta-\pi\sgn(\theta)]} 
\int_{\cos\theta}^1 e^{\rmi zt}(t-\cos\theta)^{\nu-1}\,\rmd t \qquad (\Real\nu>0).
\end{split}
\label{d.J.10}
\eeq
Then, changing in \eqref{d.J.10} the integration variable $-\rmi z(t-\cos\theta)\to t$
and recalling that for $\Real\nu>0$, $\gamma(\nu,w) = \int_0^w t^{\nu-1}\,e^{-t}\,\rmd t$,
$\mathfrak{J}^{(\nu)}_z(\theta)$ can be rewritten as follows:
\beq
\mathfrak{J}^{(\nu)}_z(\theta) = \frac{\rmi^\nu}{2\pi} e^{\rmi\nu[\theta-\pi\sgn(\theta)]}e^{\rmi z\cos\theta} 
\ P(\nu,-\rmi z\,(1-\cos\theta))  \qquad (\Real\nu>0).
\label{d.J.10.bis}
\eeq
The regularized gamma function $P(\nu,w)$ enjoys the following recurrence relation \cite[Eq. 8.8.11]{DLMF}:
\beq
P(\nu,w) = P(\nu-k,w)-e^{-w}\sum_{j=1}^k \frac{w^{\nu-j}}{\Gamma(\nu-j+1)}
\qquad (k=1,2,\ldots).
\label{recurrence}
\eeq
Let us set $k$ as the integer part of $\Real\nu$, i.e.: $k=[\Real\nu]$, where
\beq
[x] \doteq 
\begin{cases}
\lfloor x \rfloor \quad (\mathrm{the \ greatest \ integer} \leqslant x) & \mathrm{for} \; x \geqslant 0, \\[+3pt]
\lceil x \rceil \quad (\mathrm{the \ smallest \ integer} \geqslant x) & \mathrm{for} \; x < 0.
\end{cases}
\label{integerpart}
\eeq
Moreover, we denote by $\{\nu\}$ the complex fractional part of $\nu$, 
defined as: $\{\nu\}\doteq\nu-[\Real\nu]$, with $-1<\Real\{\nu\}<1$. 
We can then insert formula \eqref{recurrence} into \eqref{d.J.10.bis} and obtain:
\beq
\begin{split}
& \mathfrak{J}^{(\nu)}_z(\theta) = \frac{\rmi^\nu}{2\pi} e^{\rmi\nu[\theta-\pi\sgn(\theta)]} \\
&\quad\cdot\left[e^{\rmi z\cos\theta}P(\{\nu\},-\rmi z\,(1-\cos\theta))
-e^{\rmi z}\sum_{1\leqslant j\leqslant[\Real\nu]}
\frac{(-\rmi z)^{\nu-j}}{\Gamma(\nu-j+1)}(1-\cos\theta)^{\nu-j}\right].
\end{split}
\label{d.J.10.tris}
\eeq
When plugged into \eqref{d.J.9} the second term in the squared brackets of \eqref{d.J.10.tris}
(which is different from zero only if $[\Real\nu]\geqslant 1$) contributes
with a (finite) linear combination of the following integrals:
\beq
\int_{-\pi}^\pi e^{\rmi\nu[\theta-\pi\sgn(\theta)]}(1-\cos\theta)^{\nu-j}\,e^{\rmi\ell\theta}\,\rmd\theta,
\label{integrals}
\eeq
which are null for $\ell\in\N_0$ and $\Real\nu-j\geqslant0$. Therefore, $ \mathfrak{J}^{(\nu)}_z(\theta)$
finally reads
\beq
\mathfrak{J}^{(\nu)}_z(\theta) = \frac{\rmi^\nu}{2\pi} e^{\rmi\nu[\theta-\pi\sgn(\theta)]}
e^{\rmi z\cos\theta}P(\{\nu\},-\rmi z\,(1-\cos\theta)) \qquad (\Real\nu>0).
\label{d.J.10.quater}
\eeq
The $\nu$-domain of representation \eqref{d.J.9} can be analytically extended to the half-plane
$\Real\nu>-\ud$ where, for any $z\in\C\setminus(-\infty,0]$, the integrand is analytic on $\theta\in(-\pi,\pi]$ 
and the integral defines a function which is locally bounded on every compact subsets of $\Real\nu>-\ud$.
To see this, it is useful to make explicit the singularity brought by
the regularized gamma function $P(\nu,w)$ by writing the latter in terms of Tricomi's form
$\gamma^*(\nu,w)$ of the incomplete gamma function, i.e.: 
$P(\nu,w)=w^\nu\,\gamma^*(\nu,w)$, where
\beq
\gamma^*(\nu,w) \doteq e^{-w} \sum_{n=0}^\infty \frac{w^n}{\Gamma(\nu+n+1)}
\label{tricogamma}
\eeq
is an entire function in $w$ as well as in $\nu$ \cite[Eq. (2)]{Tricomi}. 
From \eqref{d.J.9} and \eqref{d.J.10.quater} we thus obtain:
\beq
\begin{split}
& J_{\nu+\ell}(z)=\frac{\rmi^{([\Real\nu]-\ell)}\,z^{\{\nu\}}}{2\pi} \\
&\quad\cdot\int_{-\pi}^\pi e^{-\rmi\nu\pi\sgn(\theta)} \, e^{\rmi z\cos\theta}(1-\cos\theta)^{\{\nu\}} \,
\gamma^*(\{\nu\},-\rmi z(1-\cos\theta)) \, e^{\rmi(\ell+\nu)\theta}\,\rmd\theta.
\end{split}
\label{d.J.1.bis}
\eeq
For negative values of $\Real\{\nu\}$, the integrand in \eqref{d.J.1.bis} has a singularity in $\theta=0$
due to the term $(1-\cos\theta)^{\{\nu\}}$, which is integrable for $\Real\{\nu\}>-\ud$. 
Hence, the integral \eqref{d.J.1.bis} defines 
an analytic function of $z$ on $\C$ and representation \eqref{d.J.1} holds true for $\Real\nu>-\ud$.
\end{proof}

\begin{note}
Alternatively, if in the Abel-type integral in \eqref{d.J.10} we change the integration 
variable $[2z(t-\cos\theta)/\pi]^\nu\to t$ the function $\mathfrak{J}^{(\nu)}_z(\theta)$ can be 
written in terms of Fresnel-type integrals $\mathcal{F}_\lambda(z)$, i.e:
\beq
\mathfrak{J}^{(\nu)}_z(\theta) = \frac{\pi^{\nu-1}}{2^{\nu+1}\Gamma(\nu+1)}
e^{\rmi\nu[\theta-\pi\sgn(\theta)]}\,e^{\rmi z\cos\theta} \ 
\mathcal{F}_{1/\nu}\left(\left[\frac{2z(1-\cos\theta)}{\pi}\right]^\nu\right),
\label{d.J.11}
\eeq
where 
\beq
\mathcal{F}_\lambda(w) \doteq \int_0^w \exp\left(\rmi\tfrac{\pi}{2} s^\lambda\right)\,\rmd s,
\label{d.J.11.2}
\eeq
and the classical Fresnel integrals $C(w)$ and $S(w)$ can be obtained as real and imaginary part
of \eqref{d.J.11.2} with $\lambda=2$, respectively.
\end{note}

\sku

Representation \eqref{d.J.1} can be reformulated as follows:
\beq
\begin{split}
\rmi^\ell J_{\nu+\ell}(z) &= \frac{\rmi^\nu}{2\pi}\left[
\int_{-\pi}^0 e^{\rmi z\cos\theta} P(\{\nu\},-\rmi z(1-\cos\theta))e^{\rmi((\ell+\nu)\theta+\pi\nu)}
\,\rmd\theta\right.\\
&\quad\qquad\left. +\int_0^\pi e^{\rmi z\cos\theta} P(\{\nu\},-\rmi z(1-\cos\theta))
e^{\rmi((\ell+\nu)\theta-\pi\nu)}\,\rmd\theta\right] \\
&=\frac{\rmi^\nu}{\pi}\int_0^\pi e^{\rmi z\cos\theta} 
P(\{\nu\},-\rmi z(1-\cos\theta)) \cos(\ell\theta+\nu(\theta-\pi))\,\rmd\theta.
\label{Jcos.1}
\end{split}
\eeq
In the last integral we now change the integration variable: $\theta\to\pi-\theta$, and obtain
for $\ell\in\N_0,\Real\nu>-\uds$:
\beq
J_{\nu+\ell}(z) = \frac{\rmi^{(\nu+\ell)}}{\pi} 
\int_0^\pi e^{-\rmi z\cos\theta} P(\{\nu\},-\rmi z(1+\cos\theta))\cos((\ell+\nu)\theta)\,\rmd\theta.
\label{Jcos.2}
\eeq
Now, we can put $\mu=\ell+\nu$, with $\Real\mu>-\ud$. Since $\{\mu\}=\{\nu\}$,
from \eqref{Jcos.2} we therefore obtain the following integral representation of the Bessel function 
of the first kind:
\beq
J_\mu(z) = \frac{\rmi^\mu}{\pi} 
\int_0^\pi e^{-\rmi z\cos\theta} \, P(\{\mu\},-\rmi z(1+\cos\theta))\,\cos\mu\theta \,\rmd\theta
\qquad (\Real\mu>-\uds).
\label{Jcos.3}
\eeq

As mentioned earlier, formula \eqref{Jcos.3} (and formulae \eqref{d.J.1} and \eqref{d.J.2} as well)
generalizes to complex values of $\mu$ $(\Real\mu>-\ud)$ the classical Bessel integral \eqref{add.1},
which holds for integral values of $m$.
In fact, if in \eqref{Jcos.3} we put $\mu\equiv m$ integer, then $\{\mu\}=0$ and, since
$\gamma^*(0,w) = 1$ (see \cite[Eq. (2.2)]{Gautschi}), formula \eqref{add.1} readily follows.

In view of the relation: $I_\mu(w) = (-\rmi)^\mu \,J_\mu(\rmi w)$ for 
$-\pi\leqslant\mathrm{ph}\,w\leqslant\frac{\pi}{2}$ \cite[Eq. 10.27.6]{DLMF}, we have 
also the following integral representation for the modified Bessel function of the first kind $I_\mu(z)$.
\begin{corollary}
\label{pro:modified}
Let $\mu$ be any complex number such that $\Real\mu>-\ud$. 
Then the following integral representation for the modified
Bessel function of the first kind holds ($-\pi\leqslant\mathrm{ph}\,z\leqslant\frac{\pi}{2}$):
\beq
I_{\mu}(z) = \frac{1}{\pi} \int_{0}^\pi e^{z\cos\theta} \, P(\{\mu\},z(1+\cos\theta))\,\cos\mu\theta\,\rmd\theta
\qquad (\Real\mu>-\uds).
\label{mod.1}
\eeq
\end{corollary}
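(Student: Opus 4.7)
The plan is to derive \eqref{mod.1} as a direct consequence of Theorem \ref{pro:J}, specifically via the reformulated identity \eqref{Jcos.3}, combined with the standard connection $I_\mu(w) = (-\rmi)^\mu J_\mu(\rmi w)$ valid on $-\pi\leqslant\mathrm{ph}\,w\leqslant\tfrac{\pi}{2}$. Since \eqref{Jcos.3} holds for any complex argument of $J_\mu$ for which the integral converges, and the proof in Theorem \ref{pro:J} establishes analyticity in $z$ on $\C$, we are free to evaluate the right-hand side of \eqref{Jcos.3} at $\rmi z$ under the stated phase restriction on $z$.

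First I would substitute $z\to\rmi z$ in \eqref{Jcos.3}:
\beq
J_\mu(\rmi z) = \frac{\rmi^\mu}{\pi}\int_0^\pi e^{-\rmi(\rmi z)\cos\theta}\,P\bigl(\{\mu\},-\rmi(\rmi z)(1+\cos\theta)\bigr)\cos\mu\theta\,\rmd\theta,
\nonumber
\eeq
and simplify $-\rmi(\rmi z)=z$ inside both the exponential and the second argument of $P$, yielding
\beq
J_\mu(\rmi z) = \frac{\rmi^\mu}{\pi}\int_0^\pi e^{z\cos\theta}\,P\bigl(\{\mu\},z(1+\cos\theta)\bigr)\cos\mu\theta\,\rmd\theta.
\nonumber
\eeq
Multiplying by $(-\rmi)^\mu$ produces the factor $(-\rmi)^\mu\rmi^\mu=1$ (choosing the principal branch consistently on the stated phase sector, which is exactly where the identity $I_\mu(z)=(-\rmi)^\mu J_\mu(\rmi z)$ is typically stated without ambiguity in \cite[Eq. 10.27.6]{DLMF}), giving \eqref{mod.1} immediately.

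The only step that needs a careful word is the branch/phase bookkeeping: one must verify that on $-\pi\leqslant\mathrm{ph}\,z\leqslant\tfrac{\pi}{2}$ the product $(-\rmi)^\mu\rmi^\mu$ simplifies to unity with the branches implicitly selected in \eqref{Jcos.3} (where the relevant branch entered through $z^{\{\nu\}}$ in the derivation of $\mathfrak{J}^{(\nu)}_z$). Everything else---integrability of the integrand at $\theta=0$ when $\Real\{\mu\}\in(-\tfrac12,0)$, and analyticity in $z$---is inherited from Theorem \ref{pro:J} since $\gamma^*(\{\mu\},\cdot)$ is entire, so no further analytic continuation argument is required. I do not anticipate any substantive obstacle; the corollary is essentially a cosmetic rewriting of \eqref{Jcos.3} under the Wick-type rotation $z\mapsto\rmi z$.
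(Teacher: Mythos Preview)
Your proposal is correct and follows exactly the paper's approach: the paper derives the corollary in a single sentence by applying the relation $I_\mu(w)=(-\rmi)^\mu J_\mu(\rmi w)$ (for $-\pi\leqslant\mathrm{ph}\,w\leqslant\tfrac{\pi}{2}$) directly to \eqref{Jcos.3}, which is precisely the substitution and simplification you carry out. Your additional remarks on branch consistency and integrability near $\theta=0$ are appropriate but not strictly needed beyond what Theorem~\ref{pro:J} already established.
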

Formula \eqref{mod.1} generalizes to complex values of $\mu$, $\Real\mu>-\ud$, the well-known representation
\cite[Eq. 10.32.3]{DLMF}:
\beq
I_m(z) = \frac{1}{\pi}\int_{0}^\pi e^{z\cos\theta}\, \cos m\theta\,\rmd\theta
\qquad (m\in\Z),
\label{mod.3}
\eeq
which holds for integral values of $m$.

\section{Inversion of the Fourier representation}
\label{se:inversion}

Representation \eqref{d.J.1} shows that, for fixed $z$, the function $\rmi^\ell J_{\nu+\ell}(z)$ 
coincides with the $\ell$th Fourier coefficient ($\ell\geqslant 0$) of the $2\pi$-periodic function 
$\mathfrak{J}^{(\nu)}_z(\theta)$. We are thus prompted to consider the following trigonometrical series:
\beq
\mathfrak{J}^{(\nu)}_z(\theta)=\frac{1}{2\pi}\sum_{\ell=-\infty}^\infty \tfJ_\ell^{(\nu)}(z) \ e^{-\rmi\ell\theta},
\label{inv.1}
\eeq
where $\tfJ_\ell^{(\nu)}(z)$ denote, for fixed $z$, 
the $\ell$th Fourier coefficients of $\mathfrak{J}^{(\nu)}_z(\theta)$:
\beq
\tfJ_\ell^{(\nu)}(z) = \int_{-\pi}^\pi \mathfrak{J}^{(\nu)}_z(\theta) \, e^{\rmi\ell\theta}\,\rmd\theta 
\qquad (\ell\in\Z).
\label{d.J.13}
\eeq
However, the Fourier sum representation \eqref{inv.1} of $\mathfrak{J}^{(\nu)}_z(\theta)$ can actually be 
written explicitly only in some specific cases for the lack of knowledge of the Fourier coefficients with $\ell<0$.
Indeed, equation \eqref{d.J.1} states that $\tfJ_\ell^{(\nu)}(z)=\rmi^\ell J_{\nu+\ell}(z)$ only 
for $\ell\geqslant 0$.
However, from equation \eqref{d.J.2} we see that (for fixed $z$) 
$\mathfrak{J}^{(\nu)}_z(\theta)$ enjoys the following symmetry:
\beq
\mathfrak{J}^{(\nu)}_z(-\theta) = e^{-\rmi 2\nu[\theta-\pi\sgn(\theta)]}\,\mathfrak{J}^{(\nu)}_z(\theta),
\label{d.J.12}
\eeq
which, substituted in \eqref{d.J.13}, gives
\beq
\tfJ_\ell^{(\nu)}(z) = \int_{-\pi}^\pi \mathfrak{J}^{(\nu)}_z(\theta) \, e^{\rmi 2\nu\pi\sgn(\theta)}
\,e^{-\rmi(\ell+2\nu)\theta}\,\rmd\theta.
\label{inv.2}
\eeq
Formula \eqref{inv.2} can thus induce a $\ell$-index symmetry on the Fourier coefficients $\tfJ_\ell^{(\nu)}(z)$ 
only if $2\nu$ is integer, i.e.: $\nu$ can be either integer or half-integer ($\nu>-\ud$).

\skd

\noindent\textit{Case $\nu$ integer.}
If $\nu\equiv n$ is a nonnegative integer, formula \eqref{inv.2} yields: 
\beq
\tfJ_{\ell}^{(n)}(z) = \tfJ_{-\ell-2n}^{(n)}(z) \qquad (\ell\in\Z; n=0,1,2,\ldots).
\label{d.J.14}
\eeq
For $n=0$ all the Fourier coefficients $\tfJ_{\ell}^{(n)}(z)$, $\ell\in\Z$, can be determined directly
from \eqref{d.J.1} since $\tfJ_{\ell}^{(n)}(z) = \tfJ_{-\ell}^{(n)}(z)$.
For $n\neq 0$, the Fourier coefficients $\tfJ_{\ell}^{(n)}(z)$ with $-2n+1\leqslant \ell\leqslant -1$
cannot be obtained by means of the above symmetry relation, but can nevertheless be expressed as 
(linear) functionals of the set $\{\tfJ_{\ell}^{(n)}(z);\ell\in\N_0\}$ \cite{Bros}.
Since $\{\nu\}=0$ and $P(0,w)=1$, from \eqref{d.J.2} we have: 
$2\pi\mathfrak{J}^{(n)}_z(\theta)=(-\rmi)^n \, e^{\rmi z\cos\theta}\,e^{\rmi n\theta}$, which inserted
in \eqref{inv.1}, and using \eqref{d.J.14}, gives:
\beq
(-\rmi)^n\,e^{\rmi z\cos\theta} 
=\tfJ_{-n}^{(n)}(z)+2\sum_{\ell=-n+1}^{\infty} \tfJ_{\ell}^{(n)}(z) \cos(\ell+n)\theta,
\label{inv.3}
\eeq
where $\tfJ_{\ell}^{(n)}(z)=\rmi^\ell\,J_{n+\ell}(z)$ for $\ell\geqslant 0$.
If we put $n=0$, formula \eqref{inv.3} yields:
\beq
e^{\rmi z\cos\theta}=J_0(z) + 2\sum_{\ell=1}^\infty\rmi^\ell \, J_\ell(z)\,\cos\ell\theta,
\label{inv.4}
\eeq
which represents the well-known Jacobi-Anger expansion of a plane wave into a series of 
cylindrical waves \cite[Eq. 10.12.2]{DLMF}.

\skt

\noindent\textit{Case $\nu$ half-integer.}
If $\nu$ is a positive half-integer, $\nu\equiv n+\ud$, $n=0,1,2,\ldots$,
the symmetry formula \eqref{inv.2} reads: 
\beq
\tfJ_{\ell}^{(n+\ud)}(z) = -\tfJ_{-\ell-2n-1}^{(n+\ud)}(z) \qquad (\ell\in\Z; n=0,1,2,\ldots).
\label{d.J.14.bis}
\eeq
For $n=0$ the Fourier coefficients with $\ell<0$ can be obtained from those with $\ell\geqslant 0$
from the symmetry relation: $\tfJ_{\ell}^{(\ud)}(z) = -\tfJ_{-\ell-1}^{(\ud)}(z)$, $(\ell\in\Z)$.
Instead, as in the case of integral $\nu$, if $n\neq 0$ the Fourier coefficients $\tfJ_{\ell}^{(n+\ud)}(z)$
with $-2n\leqslant\ell\leqslant -1$ are expressible as linear functionals of the coefficients 
with $\ell\geqslant 0$.

Now, we can plug formula \eqref{d.J.2} for $\mathfrak{J}^{(n+\ud)}_z(\theta)$ into \eqref{inv.1} and,
in view of  \eqref{d.J.14.bis}, obtain the following expansion for the incomplete gamma function of order $\ud$:
\beq
\gamma(\uds,-\rmi z(1-\cos\theta))
=2\sqrt{\pi}\,\rmi^n\sgn(\theta)e^{-\rmi\frac{\pi}{4}}e^{-\rmi z\cos\theta} \!
\sds\sum_{\ell=-n}^\infty \tfJ_{\ell}^{(n+\ud)}(z)\,\sin(\ell+n+\uds)\theta.
\label{inv.5}
\eeq
Now, putting in \eqref{inv.5} $n=0$, $\varphi=\theta/2$ and $w^2=-2\rmi z$ we obtain:
\beq
\gamma(\uds,w^2\sin^2\varphi)=2\sqrt{\pi}\,\sgn(\varphi)e^{\ud w^2\cos 2\varphi}
\sds\sum_{\ell=0}^\infty (-1)^\ell\,I_{\ell+\ud}(w^2/2)\,\sin(2\ell+1)\varphi,
\label{inv.8}
\eeq
where we used the modified Bessel functions of the first kind $I_\nu(z)=(-\rmi)^\nu J_\nu(\rmi z)$.
Finally, recalling that $\mathrm{erf}(z)=\pi^{-\ud}\gamma(\ud,z^2)$, where $\mathrm{erf}(z)$
denotes the error function \cite[Eq. 7.2.1]{DLMF}, we obtain the following expansion:
\beq
\mathrm{erf}(w\sin\varphi)=2 e^{\ud w^2\cos 2\varphi}
\sds\sum_{\ell=0}^\infty (-1)^\ell I_{\ell+\ud}(w^2/2) \sin(2\ell+1)\varphi
\qquad (-{\textstyle\frac{\pi}{2}}<\mathrm{ph} w \leqslant {\textstyle\frac{\pi}{2}}).
\label{error_function}
\eeq
A similar representation of the error function in terms of modified Bessel
functions of the first kind of integer order is given by Luke in
\cite[Eq. (2.11)]{Luke} through the expansion of the confluent hypergeometric
function in series of Bessel functions.
As a particular case of \eqref{error_function}, we first put $\varphi=\pi/2$ and obtain 
(see also \cite[Eq. (20)]{Luke2}):
\beq
\mathrm{erf}(w)=2 \, e^{-w^2/2}\sum_{\ell=0}^\infty I_{\ell+\ud}(w^2/2).
\label{error_function_2}
\eeq
Similarly, if in \eqref{error_function} we put $\varphi=\pi/4$ and $w\to\sqrt{2}w$, we get the expansion:
\beq
\mathrm{erf}(w)=\sqrt{2} \sum_{\ell=0}^\infty (-1)^\ell\left[I_{2\ell+\ud}(w^2)-I_{2\ell+\frac{3}{2}}(w^2)\right],
\label{error_function_3}
\eeq
which is equivalent to \cite[Eq. 7.6.8]{DLMF} (see also \cite[Eq. (1), p. 122]{Luke2} and 
\cite[Eqs. (33) and (34)]{Veling}).

Finally, as a last example of application of expansion \eqref{inv.1}, we can use the form \eqref{d.J.11}
of $\mathfrak{J}^{(n+\ud)}_z(\theta)$ given in terms of Fresnel integrals. 
We can set $\nu=\ud$ in \eqref{d.J.11} and \eqref{inv.1} and
put $w=2(z/\pi)^\ud$, $a=\cos(\theta-\pi)/2$, then we obtain the following representation of
Fresnel's integral $\mathcal{F}_2(w)\doteq C(w)+\rmi S(w)$ \cite[Eqs. 7.2.7 and 7.2.8]{DLMF} 
(see also \eqref{d.J.11.2}):
\beq
\mathcal{F}_2(aw) = w \, e^{\rmi\frac{\pi}{4}w^2(2a^2-1)}
\sum_{\ell=0}^\infty (-\rmi)^\ell \, T_{2\ell+1}(a) \, j_{\ell}(\pi w^2/4),
\label{error_function_4}
\eeq
where $T_k(\cdot)$ denotes the Chebyshev polynomials of the first kind and $j_\ell(w)$ are
the spherical Bessel functions of the first kind: $j_\ell(w)=\sqrt{\ud\pi/w}\,J_{\ell+\ud}(w)$.

\section{Neumann series of Bessel functions}
\label{se:neumann}

Neumann series of Bessel functions are defined by \cite[Chapter XVI]{Watson}
\beq
\mathfrak{N}_\nu(z) \sim \sum_{n=0}^\infty a_n \, J_{\nu+n}(z),
\label{ne.1}
\eeq
where $z$ is in general a complex variable, and $\nu$ and $\{a_n\}_{n=0}^\infty$ are constants.
This kind of series have been investigated extensively in view of their relevance in a number of 
physical problems (see the Introduction of Ref. \cite{Pogany} for a brief summary of these problems).
In addition, Neumann series have been shown to be a useful mathematical tool to the solution of 
classes of differential and mixed differences equations (see, e.g., \cite{Kravchenko} and \cite[p. 530]{Watson}).
The domain of convergence of series \eqref{ne.1} is a disk whose radius evidently depends on the asymptotic
behaviour of the sequence of coefficients $a_n$ and can be determined by the condition
\beq
\varlimsup_{n\to\infty}\left|\frac{a_n\,(z/2)^{\nu+n}}{\Gamma(\nu+n+1)}\right|^{1/n} < 1.
\label{ne.1.1}
\eeq
Integral representations are powerful tools to deduce properties of series.
In the case of expansions of type \eqref{ne.1}, examples of integral representations have been given 
by Wilkins \cite{Wilkins1}, Rice \cite{Rice} and, more recently, 
by Pog\'any, S\"uli and coworkers \cite{Baricz,Jankov,Pogany}.
In this section we give an integral representation of the function $\mathfrak{N}_\nu(z)$, 
which the Neumann series converges to. 
This is achieved by exploiting the representation \eqref{d.J.1} of $J_{\nu+n}(z)$ given is Section 
\ref{se:besselfunctions}, and assuming the condition on the asymptotic behavior 
of the sequence $\{a_n\}$ which basically guarantees the boundedness of trigonometric series.
The Fourier-type integral representation of $J_{\nu+n}(z)$ allows us to write
$\mathfrak{N}_\nu(z)$ in integral form with a kernel (the kernel associated with Neumann expansions) 
which is proportional to the regularized incomplete gamma function $P(\{\nu\},\cdot)$ and is
independent of the coefficients $a_n$. The density function of the integral representation, instead,
depends only upon the given \emph{input} sequence of coefficients $a_n$,
and is simply the analytic function, on the unit circle, associated with the sequence of 
coefficients $\{a_n\}_{n=0}^\infty$.
This representation easily leads to closed-form integral representations of Neumann expansions of
Bessel functions whenever the closed-form of the analytic function associated with the sequence of coefficients
is known.
As examples of application of this result, new integral representations for
bivariate Lommel's functions and Kelvin's functions will then be given.
More examples of this type of integral representations of special functions of the mathematical physics
will be presented and analyzed in a forthcoming paper \cite{DeMicheli2}.
Now, we can prove the following theorem.

\begin{theorem}
\label{pro:ne.1}
Let $\{a_n\}_{n=0}^\infty \subset\C$ be a null sequence such that
\beq
\sum_{n=0}^\infty |a_n| < \infty.
\label{ne.0.1}
\eeq
Then, for $\Real\nu>-\ud$ the series \eqref{ne.1} has the following integral representation in the slit domain 
$z\in\C\setminus(-\infty,0]$ if $\nu\not\in\N_0$ or in $z\in\C$ if $\nu\in\N_0$:
\beq
\mathfrak{N}_\nu(z) = \frac{1}{2\pi}\int_{-\pi}^\pi K^{(\nu)}_z(\theta)
\,A(e^{\rmi(\theta-\pi/2)})\,\rmd\theta
\qquad \left(\Real\nu>-\uds\right),
\label{ne.2}
\eeq
where 
\beq
A(e^{\rmi\theta}) = \sum_{n=0}^\infty a_n \, e^{\rmi n\theta},
\label{ne.4}
\eeq
and the kernel $K^{(\nu)}_z(\theta)$ is given by
\beq
K^{(\nu)}_z(\theta) = \rmi^\nu\,e^{\rmi\nu[\theta-\pi\sgn(\theta)]}e^{\rmi z\cos\theta} 
\, P(\{\nu\},-\rmi z\,(1-\cos\theta)),
\label{ne.3}
\eeq
$P(\cdot)$ denoting the regularized gamma function.
\end{theorem}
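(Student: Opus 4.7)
The plan is to substitute the Fourier-type integral representation of Theorem~\ref{pro:J} into each summand of the Neumann series \eqref{ne.1} and then interchange the order of summation and integration. Plugging \eqref{d.J.1} into \eqref{ne.1} yields, formally,
\[
\mathfrak{N}_\nu(z) \;=\; \sum_{n=0}^\infty a_n\,(-\rmi)^n \int_{-\pi}^\pi \mathfrak{J}_z^{(\nu)}(\theta)\,e^{\rmi n\theta}\,\rmd\theta.
\]
The key algebraic observation is the identity $(-\rmi)^n e^{\rmi n\theta} = e^{\rmi n(\theta-\pi/2)}$, so that, once the sum is brought inside the integral, it reproduces precisely the boundary value $A(e^{\rmi(\theta-\pi/2)})$ defined in \eqref{ne.4}. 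A direct comparison of \eqref{d.J.2} with \eqref{ne.3} gives $K_z^{(\nu)}(\theta) = 2\pi\,\mathfrak{J}_z^{(\nu)}(\theta)$, so the target identity \eqref{ne.2} follows as soon as the exchange of sum and integral is justified.

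To justify this exchange I would invoke Fubini--Tonelli. The absolute integrand satisfies
\[
\sum_{n=0}^\infty \bigl|\,a_n (-\rmi)^n \mathfrak{J}_z^{(\nu)}(\theta)\, e^{\rmi n\theta}\,\bigr| \;=\; \bigl|\mathfrak{J}_z^{(\nu)}(\theta)\bigr|\,\sum_{n=0}^\infty |a_n|,
\]
so Tonelli applies provided $\mathfrak{J}_z^{(\nu)}(\theta)\in L^1([-\pi,\pi],\rmd\theta)$ and $\sum_n|a_n|<\infty$. The latter is exactly the hypothesis \eqref{ne.0.1}; the former is already established in the proof of Theorem~\ref{pro:J}. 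Indeed, writing the regularized gamma function via Tricomi's entire function $\gamma^*$ as in \eqref{tricogamma}--\eqref{d.J.1.bis} isolates the only potential singularity as $(1-\cos\theta)^{\{\nu\}}$ at $\theta=0$, and this is integrable precisely when $\Real\{\nu\}>-\ud$, i.e. under the standing assumption $\Real\nu>-\ud$. Because $\sum_n |a_n|<\infty$ also makes $A(e^{\rmi\varphi})$ continuous on the unit circle, the resulting integrand is the product of an $L^1$ and an $L^\infty$ function, confirming absolute convergence of the representation.

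The remaining bookkeeping concerns the $z$-domain. For $\nu\not\in\N_0$ the factor $z^{\{\nu\}}$ made explicit in \eqref{d.J.1.bis} requires a branch choice, so the representation is set on the slit domain $z\in\C\setminus(-\infty,0]$ with the principal branch; for $\nu\in\N_0$ one has $\{\nu\}=0$, no branch arises, and validity extends to all $z\in\C$. I expect the main technical point to be nothing deeper than verifying the $L^1$ domination above and tracking the branch; once Theorem~\ref{pro:J} is granted, the argument is essentially a Fubini exchange followed by the algebraic identification $K_z^{(\nu)} = 2\pi\,\mathfrak{J}_z^{(\nu)}$.
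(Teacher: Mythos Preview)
Your proposal is correct and follows essentially the same route as the paper: substitute \eqref{d.J.1} into the Neumann sum, identify $K_z^{(\nu)}=2\pi\,\mathfrak{J}_z^{(\nu)}$, and justify the interchange by Fubini--Tonelli using the Tricomi form to isolate the $(1-\cos\theta)^{\{\nu\}}$ singularity together with the hypothesis $\sum_n|a_n|<\infty$. The paper carries out exactly this bound (see \eqref{ne.4.4}--\eqref{ne.4.7}) and records the same branch-cut bookkeeping for $z^{\{\nu\}}$.
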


\begin{proof}
We first insert the integral representation \eqref{d.J.1} of $J_{\nu+n}(z)$ into sum \eqref{ne.1}:
\beq
\sum_{n=0}^\infty a_n \, J_{\nu+n}(z)
=\sum_{n=0}^\infty (-\rmi)^n a_n \int_{-\pi}^\pi \mathfrak{J}^{(\nu)}_z(\theta) \, e^{\rmi n\theta}\,\rmd\theta
\qquad (\Real\nu>-\uds),
\label{ne.4.1}
\eeq
and then swap sum and integral to give:
\beq
\int_{-\pi}^\pi\rmd\theta \ \mathfrak{J}^{(\nu)}_z(\theta) \, \sum_{n=0}^\infty (-\rmi)^n a_n\,e^{\rmi n\theta}
= \frac{1}{2\pi}\int_{-\pi}^\pi K^{(\nu)}_z(\theta) \, A(e^{\rmi(\theta-\pi/2)}) \,\rmd\theta,
\label{ne.4.2}
\eeq
where $K^{(\nu)}_z(\theta)$ is given by \eqref{ne.3} and $A(e^{\rmi\theta})$ by \eqref{ne.4}. 
By the Fubini-Tonelli theorem, exchanging sum and integral is legitimate if
\beq
\int_{-\pi}^\pi \left|K^{(\nu)}_z(\theta)\,A(e^{\rmi(\theta-\pi/2)})\right| \,\rmd\theta <\infty.
\label{ne.4.3}
\eeq
For $\theta\in(-\pi,\pi]$ the kernel $K^{(\nu)}_z(\theta)$ in \eqref{ne.3} can be rewritten as follows:
\beq
K^{(\nu)}_z(\theta)
=\rmi^{[\Real\nu]}\,z^{\{\nu\}} \, e^{\rmi\nu[\theta-\pi\sgn(\theta)]}e^{\rmi z\cos\theta}
(1-\cos\theta)^{\{\nu\}}\,\gamma^*(\{\nu\},-\rmi z\,(1-\cos\theta)),
\label{ne.4.4}
\eeq
where $\gamma^*(a,w)$ denotes the Tricomi form of the incomplete gamma function.
Recall that $\gamma^*(a,w)$ is an entire function either in $a$ and in $w$.
Consequently, on compact domains in $\Omega\doteq(z\in\C\setminus(-\infty,0])\times (-\ud<\{\nu\}<1)$
the lower incomplete gamma function $|\gamma^*(\{\nu\},-\rmi z(1-\cos\theta))|$ can be bounded 
uniformly in $\theta$, i.e.: $|\gamma^*(\{\nu\},-\rmi z(1-\cos\theta))|\leqslant c_{z,\nu}$, 
the constant $c_{z,\nu}$ depending only on $z$ and $\nu$. \cite{Gautschi,Tricomi}.
From \eqref{ne.4.4} it is clear that, for fixed $\theta$, the multivaluedness 
of the kernel $K^{(\nu)}_z(\theta)$ is brought by the fractional power $z^{\{\nu\}}$ (when $\{\nu\}\neq 0$).
From \eqref{ne.4.4} we have:
\beq
\left|K^{(\nu)}_z(\theta)\right|\leqslant
c_{z,\nu}\,e^{3\pi|\Imag\nu|}\,e^{|z|}\,|z|^{\Real\{\nu\}} |1-\cos\theta|^{\{\nu\}} 
= C_{\nu,z} \,|1-\cos\theta|^{\{\nu\}},
\label{ne.4.6}
\eeq
$C_{\nu,z}$ being a finite constant on compacta in $\Omega$.
From \eqref{ne.4.3} and \eqref{ne.4.6} it then follows:
\beq
\begin{split}
& \left|\mathfrak{N}_\nu(z)\right|=
\left|\int_{-\pi}^\pi K^{(\nu)}_z(\theta)\,A(e^{\rmi(\theta-\pi/2)})\,\rmd\theta\right|
\leqslant\int_{-\pi}^\pi 
\left|K^{(\nu)}_z(\theta)\right|\,\left|\sum_{n=0}^\infty a_n\,e^{\rmi(\theta-\pi/2)}\right| \,\rmd\theta \\ 
&\quad\leqslant C_{\nu,z} \sum_{n=0}^\infty |a_n| \, \int_{-\pi}^\pi |1-\cos\theta|^{\{\nu\}} \,\rmd\theta < \infty,
\end{split}
\label{ne.4.7}
\eeq
if $\Real\{\nu\}>-\ud$ and in view of \eqref{ne.0.1}. Statement \eqref{ne.2} then follows.
\end{proof}

\begin{remark}
From \eqref{ne.4.6} we see that for $\Real\nu\geqslant 0$ the kernel $K_z^{(\nu)}(\theta)$ is bounded uniformly
in $\theta$ on compacta in $\Omega$. Hence, assumption \eqref{ne.0.1} of boundedness 
for $A(e^{\rmi\theta})$ can be substituted by a less restrictive assumption of integrability (see \eqref{ne.4.7}). 
Explicitly, this is guaranteed for instance (see \cite[Theorem 1, p. 54]{Bray}) by replacing 
\eqref{ne.0.1} with the weaker asymptotic condition  
\beq
\sum_{n=1}^\infty\frac{|a_n|}{n} < \infty,
\label{ne.0.1.bis}
\eeq
along with the following additional regularity assumption on the sequence $\{a_n\}$:
\beq
\sum_{n=1}^\infty\left(\frac{\sum_{k=n}^\infty\left|\Delta a_k\right|^p}{n}\right)^{1/p}<\infty
\qquad \mathrm{for~some~integer~}p>1,
\label{ne.0}
\eeq
where $\Delta a_k=(a_k-a_{k+1})$.
\end{remark}

\begin{remark}
The integral representation \eqref{ne.2} can also be written in the following form:
\beq
\mathfrak{N}_\nu(z) 
=\frac{e^{\rmi z}}{\pi} \, \int_{0}^\pi \mathcal{K}^{(\{\nu\})}(-\rmi z(1+\cos\theta)) 
\ \mathcal{A}^{(\nu)}(\theta)\,\rmd\theta,
\label{ne.2a}
\eeq
where the kernel is
\beq
\mathcal{K}^{(\{\nu\})}(w) = e^w \, P(\{\nu\},w), 
\label{ne.2b}
\eeq
and the function associated with the sequence of coefficients $\{a_n\}$ is:
\beq
\mathcal{A}^{(\nu)}(\theta) = \sum_{n=0}^\infty \rmi^{n+\nu} a_n \, \cos(n+\nu)\theta.
\label{ne.2c}
\eeq
\end{remark}

\subsection{Lommel's functions of two variables}
\label{subse:lommel}

As first example of application of the previous analysis, we now consider the bivariate Lommel 
functions $U_\nu(w,z)$ and $V_\nu(w,z)$ of order $\nu$. These functions can be regarded as 
particular solutions of the inhomogeneous Bessel equation (see \cite[Section 27]{Korenev} 
for a summary of their properties) and were first introduced by Lommel
in the analysis of diffraction problems \cite[p. 438]{Born}.
They have found applications also, e.g., in soliton theory \cite{Flesch}
and in the theory of incomplete cylindrical functions \cite[Chapter 4]{Agrest}.
Lommel's functions of two variables can be defined for unrestricted values of $\nu$ by the Neumann 
series \cite[Section 16.5, Eqs. (5) and (6); p. 537, 538]{Watson}:
\begin{align}
U_\nu(w,z) & \doteq 
\sum_{n=0}^\infty (-1)^n \left(\frac{w}{z}\right)^{\nu+2n} J_{\nu+2n}(z), 
\label{lomm.1} \\
V_\nu(w,z) & \doteq \cos\left(\frac{w}{2}+\frac{z^2}{2w}+\frac{\nu\pi}{2}\right) + U_{-\nu+2}(w,z).
\label{lomm.2}
\end{align}
The function $\left(w/z\right)^{-\nu} U_\nu(w,z)$ has therefore the structure \eqref{ne.1} with coefficients:
\beq
a_n(w,z) = 
\begin{cases}
(-1)^{n/2}(w/z)^n & \textrm{if} \ n \ \textrm{is even}, \\[+2pt]
0 & \textrm{if} \ n \ \textrm{is odd}.
\end{cases}
\label{lomm.3}
\eeq
Condition \eqref{ne.0.1} then reads
\beq
\sum_{n=0}^\infty |a_n(w,z)| = \sum_{n=0}^\infty |w/z|^{2n}<\infty,
\label{lomm.4}
\eeq
if $|w/z|<1$.
Therefore, assuming the condition $|w/z|<1$ and using \eqref{lomm.3}, 
the analytic function $A_{w,z}^{(U)}(e^{\rmi\theta})$ associated with the bivariate Lommel function $U_\nu(w,z)$ 
then reads (see \eqref{ne.4}):
\beq
A_{w,z}^{(U)}(e^{\rmi\theta}) = \sum_{n=0}^\infty (-1)^n \left(\frac{w}{z}\right)^{2n}e^{\rmi 2n\theta}, 
\label{lomm.6}
\eeq
which is a power series converging for $|w/z|<1$ and any $\theta\in(-\pi,\pi]$ to
\beq
A_{w,z}^{(U)}(e^{\rmi\theta})= \frac{1}{1+(w/z)^2e^{\rmi 2\theta}}.
\label{lomm.7}
\eeq
Finally, from \eqref{ne.2} the integral representation of the Lommel function $U_\nu(w,z)$ reads explicitly
for $\Real\nu>-\ud$ and $|w/z|<1$:
\beq
\begin{split}
& U_\nu(w,z) \\
&\quad = \frac{\rmi^\nu}{\pi}\!\left(\frac{w}{z}\right)^{\!\nu} \!\!
\int_0^\pi \frac{P(\{\nu\},-\rmi z(1+\cos\theta))\,[\cos\nu\theta-(w/z)^2\cos(\nu-2)\theta]}
{1-2(w/z)^2\cos2\theta+(w/z)^4}e^{-\rmi x\cos\theta}\,\rmd\theta.
\end{split}
\label{lomm.8}
\eeq
In view of the restriction $\Real\nu>-\ud$, a similar integral representation can be
obtained from \eqref{lomm.2} for the Lommel function $V_\nu(w,z)$ for $\Real\nu<\frac{5}{2}$.

\skt

\subsection{Kelvin's functions}
\label{subse:kelvin}

Kelvin's functions of the first kind $\mathrm{ber}_\nu(x)$ and $\mathrm{bei}_\nu(x)$ are, respectively,
real and imaginary part of the solution (the symbols stand indeed for \emph{Bessel-real} and 
\emph{Bessel-imaginary}), regular in the origin, of the differential equation \cite{Apelblat}
\beq
x^2 y'' + xy' -(\nu^2+\rmi x^2)y = 0,
\label{kel.1}
\eeq
and emerge as solutions of various physics and engineering problems occurring in the theory of
electrical currents \cite{Brualla}, magnetism \cite{Jardim}, fluid mechanics and elasticity \cite{Apelblat}. 
Kelvin's functions of order $\nu$ are related to the Bessel functions as follows \cite{DLMF}:
\beq
\mathrm{ber}_\nu(x) + \rmi\,\mathrm{bei}_\nu(x) = e^{\rmi\pi\nu}J_\nu(xe^{-\rmi\pi/4})
\qquad (x\geqslant 0,\nu\in\R).
\label{kel.2}
\eeq
Integral representations are known: for instance, when $\nu\in\R$, Schl\"{a}fli's representation of
Bessel's function $J_\nu(x)$ leads to the following representation of Kelvin's \cite{Apelblat}:
\begin{align}
\begin{split}
\mathrm{ber}_\nu(\sqrt{2}\,x) &= \frac{1}{\pi}\int_0^\pi [\cos\pi\nu\cos(x\sin t-\nu t)\cosh(x\sin t) \\
&\qquad\qquad-\sin\pi\nu\sin(x\sin t-\nu t)\sinh(x\sin t)]\,\rmd t \\
&\quad-\frac{\sin\pi\nu}{\pi}\int_0^\infty e^{-\nu t-x\sinh t} \cos(x\sinh t+\pi\nu)\,\rmd t,
\end{split}
\label{kel.3a} \\
\begin{split}
\mathrm{bei}_\nu(\sqrt{2}\,x) &= \frac{1}{\pi}\int_0^\pi[\cos\pi\nu\sin(x\sin t-\nu t)\sinh(x\sin t) \\
&\qquad\qquad+\sin\pi\nu\cos(x\sin t-\nu t)\cosh(x\sin t)]\,\rmd t \\
&\quad-\frac{\sin\pi\nu}{\pi}\int_0^\infty e^{-\nu t-x\sinh t}\sin(x\sinh t+\pi\nu)\,\rmd t, 
\end{split}
\label{kel.3b}
\end{align}
which, when $\nu\equiv n$ is a nonnegative integer, reduce to:
\begin{subequations}
\begin{align}
\mathrm{ber}_n(\sqrt{2}\,x) &= \frac{(-1)^n}{\pi}\int_0^\pi\cos(x\sin t-nt)\cosh(x\sin t)\,\rmd t,
\label{kel.4a} \\
\mathrm{bei}_n(\sqrt{2}\,x) &= \frac{(-1)^n}{\pi}\int_0^\pi\sin(x\sin t-nt)\sinh(x\sin t)\,\rmd t.
\label{kel.4b}
\end{align}
\label{kel.4}
\end{subequations}
A novel integral representation for $\mathrm{ber}_\nu(x)+\rmi\,\mathrm{bei}_\nu(x)$ can be readily 
obtained from the results of Theorem \ref{pro:ne.1} (see \eqref{ne.2}) observing that, for $x\in\R$ and
$\nu\in\C$, the following expansion in series of Bessel functions holds \cite[Eq. 10.66.1]{DLMF}:
\beq
\mathrm{ber}_\nu(x) + \rmi\,\mathrm{bei}_\nu(x)
= e^{\rmi 3\pi\nu/4}\sum_{k=0}^\infty
\frac{x^k}{2^{k/2}\,k!} \,e^{\rmi k\pi/4} \, J_{\nu+k}(x).
\label{kel.5}
\eeq
Referring to \eqref{ne.1}, we set
\beq
a_k (x) = \frac{x^k}{2^{k/2}\,k!}\,e^{\rmi k\pi/4} \qquad (k\geqslant 0).
\label{kel.6}
\eeq
Then, from \eqref{ne.4} the analytic function associated with Kelvin's functions reads:
\beq
A_x^{(K)}(\theta)
= \sum_{k=0}^\infty \frac{x^k}{2^{k/2}\,k!}\, e^{\rmi k(\theta+\pi/4)}
=\exp(x\,e^{\rmi(\theta+\pi/4)}/\sqrt{2}),
\label{kel.7}
\eeq
the series converging for any $x\in\R$ and $\theta\in(-\pi,\pi]$.
Finally, using \eqref{ne.2}, \eqref{ne.3} and \eqref{kel.7}
we have for $\Real\nu>-\ud$ the following integral representation for Kelvin's functions:
\beq
\begin{split}
& \mathrm{ber}_\nu(\sqrt{2}x) + \rmi\,\mathrm{bei}_\nu(\sqrt{2}x) \\
& \qquad = \frac{e^{\rmi 5\pi\nu/4}}{\pi}\int_{0}^\pi 
e^{-cx\cos\theta}\,P(\{\nu\},-\rmi\sqrt{2}x(1+\cos\theta))\,
\cos\left[\nu\theta-\overline{c}x\sin\theta\right]\,\rmd\theta,
\end{split}
\label{kel.10}
\eeq
with $c \doteq \exp(\rmi\pi/4)$ and the bar standing for complex conjugate. 
Formula \eqref{kel.10} represents the generalization to $\Real\nu>-\ud$ of formulae \eqref{kel.4}.

\skt\skt

\bibliographystyle{amsplain}

\end{document}